\documentclass{amsart}
\usepackage{graphicx} 
\usepackage[utf8]{inputenc}
\usepackage{amsmath}
\usepackage{tikz}
\usetikzlibrary{shapes.geometric}
\usepackage{amsthm}
\usepackage{amsfonts}
\usepackage{caption}
\usepackage{amssymb}
\usepackage{verbatim}
\usepackage{subcaption}
\usepackage{changepage}
\newtheorem{theorem}{Theorem}
\newtheorem{lemma}[theorem]{Lemma}

\newtheorem{corollary}[theorem]{Corollary}
\newtheorem{definition}[theorem]{Definition}

\title{Ducci on $\mathbb{Z}_m^3$ and the Max Period}
\author{Mark L. Lewis}
\address{Department of Mathematical Sciences\\
Kent State University\\
Kent, OH 44242}
\email{lewis@math.kent.edu}
\author{Shannon M. Tefft}
\address{Department of Mathematical Sciences\\
Kent State University\\
Kent, OH 44242}
\email{stefft@kent.edu}
\date{January 2024}

\subjclass{20D60, 11B83, 11B50}
\keywords{Ducci sequence, modular arithmetic, length, period, $n$-Number Game}

\begin{document}
\begin{abstract}
    Let $D(x_1, x_2, ..., x_n)=(x_1+x_2 \;\text{mod} \; m, x_2+x_3 \; \text{mod} \; m, ..., x_n+x_1 \; \text{mod} \; m)$ where $D \in End(\mathbb{Z}_m^n)$ be the Ducci function. The sequence $\{D^k(\mathbf{u})\}_{k=0}^{\infty}$ will eventually enter a cycle. If $n=3$, we aim to establish the longest a cycle can be for a given $m$.
\end{abstract}
\maketitle

\section{Introduction}\label{intro}
\indent Let $D: \mathbb{Z}_m^n \to \mathbb{Z}_m^n$ be defined as 
\[D(x_1, x_2, ..., x_n)=(x_1+x_2 \; \text{mod} \; m, x_2+x_3 \; \text{mod} \; m, ..., x_n+x_1 \; \text{mod} \; m)\]

$D$ is commonly known as the \textbf{Ducci function}, with \cite{Breuer,Ehrlich,Glaser} being a few sources that refer to it in this way. The sequence $\{D^k(\mathbf{u})\}_{k=0}^{\infty}$ for a tuple $\mathbf{u} \in \mathbb{Z}_m^n$ is known as the \textbf{Ducci sequence of} $\mathbf{u}$. 

\indent To give an example of a Ducci sequence, consider $(3,4,4) \in \mathbb{Z}_6^3$ and the first eight terms in its Ducci sequence: $(3,4,4),(1,2,1),(3,3,2),(0,5,5),$ $(5,4,5),$ $(3,3,4),$ $(0,1,1),(1,2,1)$. Notice that the rest of the Ducci sequence for $(3,4,4)$ will cycle through the tuples $(1,2,1),(3,3,2),(0,5,5),(5,4,5),(3,3,4),$ $(0,1,1)$. These six tuples are called the Ducci cycle of the Ducci sequence of $(3,4,4)$. Generally speaking,

\begin{definition}
As in \cite{Breuer, Chamberland, Dular, Paper1}, the \textbf{Ducci cycle of} $\mathbf{u}$ is  $\{\mathbf{v} \in \mathbb{Z}_m^n \mid \exists k \in \mathbb{Z}^+, l \in \mathbb{Z}^+ \cup \{0\} \ni \mathbf{v}=D^{l+k}(\mathbf{u})=D^k(\mathbf{u})\}$.
The \textbf{Length of } $\mathbf{u}$, $\mathbf{Len(u)}$, is the smallest $l$ satisfying the equation $\mathbf{v}=D^{l+k}(\mathbf{u})=D^k(\mathbf{u})$ for some $v \in \mathbb{Z}_m^n$ \and the \textbf{Period of} $\mathbf{u}$, $\mathbf{Per(u)}$, is the smallest $k$ that satisfies the equation, as it is stated in Definition 3 in \cite{Breuer} and \cite{Ehrlich, Paper1}. 
\end{definition}

For ease, if $\mathbf{v} \in \mathbb{Z}_m^n$ is in the Ducci cycle for some $\mathbf{u} \in \mathbb{Z}_m^n$, we may say that $\mathbf{v}$ is in a Ducci cycle. In addition to this, because $\mathbb{Z}_m^n$ is finite, it must be true that every Ducci cycle in $\mathbb{Z}_m^n$ enters a cycle. 

\indent The Ducci sequence of $(0,0,...,0,1) \in \mathbb{Z}_m^n$ is a particularly important Ducci sequence and is called the \textbf{basic Ducci sequence} of $\mathbb{Z}_m^n$. We believe this sequence name was first coined by \cite{Ehrlich} on page 302 and is also used by \cite{Breuer,Glaser, Paper1}.
Denote $L_m(n)=Len(0,0,...,0,1)$ and $P_m(n)=Per(0,0,...,0,1)$ as it was defined on page 2 of \cite{Paper1} and is similar in notation to those used in \cite{Breuer, Dular, Ehrlich}. The basic Ducci sequence, $L_m(n)$, and $P_m(n)$ are particularly important because for $\mathbf{u} \in \mathbb{Z}_m^n$, $L_m(n) \geq Len(\mathbf{u})$ and $Per(\mathbf{u})|P_m(n)$, which is Lemma 1 of \cite{Breuer}. Note that because of this theorem, $P_m(n)$ provides a maximum value for what the period and length of a tuple will be for a given $n,m$. Moreover, it further limits the possible values of the period of a tuple for a given $n,m$ to the divisors of $P_m(n)$. 

\indent In this paper, we focus on when $n=3$ and the value of $P_m(3)$ for a given $m$.  The goal is to provide a comprehensive guide to finding the value of $P_m(3)$ by proving the following theorem:\\
\begin{theorem}\label{Period_for_n=3}
Let $n=3$
\begin{enumerate} 
    \item If $m=2^l$ for $l \geq 2$, then $P_m(3)=6$\\
    \item $P_2(2)=3$\\
    \item $P_3(3)=6$\\
    \item If $p$ is an odd prime and $k>0$ is the smallest number such that $2^k \equiv 1 \; \text{mod} \; p$, then $P_p(3)=\text{lcm}(6,k)$\\
    \item If $N \geq 1$ is the smallest integer such that $2^{p-1} \equiv 1 \; \text{mod} \; p^N$ and $2^{p-1} \not \equiv 1 \; \text{mod} \; p^{N+1}$, then $P_{p^{N+k}}(3)=p^kP_p(3)$ for $k \geq 1$.\\
    \item Let $m=2^lp_1^{k_1}p_2^{k_2}\cdots p_r^{k_r}$ where $p_i$ are distinct primes, $l \geq 0$, and $k_i \in \mathbb{Z}^+$ for $1 \leq i \leq r$. Let $N_i$ be the smallest integer such that $2^{p_i-1} \equiv 1 \; \text{mod} \; p_i^{N_i}$ and $2^{p_i-1} \not \equiv 1 \; \text{mod} \; p_i^{N_i+1}$ and let $\alpha_i=0$ if $k_i \leq N_i$ and $\alpha_i=k_i-N_i$ if $k_i>N_i$. Then
     \[P_m(3)=lcm\{P_{p_i^{k_i}}(3) \; |\; 1 \leq i \leq r\}\]\[=lcm\{P_{p_i}(3)\; |\; 1 \leq i \leq r\}\prod_{i=1}^r p_i^{\alpha_i}\] 
\end{enumerate}
\end{theorem}
 
\indent We note that $(1)$ and $(2)$ of this theorem were proved in Proposition 5.2 in  \cite{Dular} and that for $m$ odd and not divisible by a prime $p$ such that $2^{p-1} \equiv 1 \; \text{mod} \; p^2$, $(5)$ and $(6)$ follows from Theorem 5.11 in \cite{Dular}. 

\indent We would like to thank Professor Bruno Dular for bringing his paper \cite{Dular} to our attention and for his comments that helped us improve the paper. 

\indent The work in this paper was done while the second author was a Ph.D. student at Kent State University under the advisement of the first author and will appear as part of the second author's dissertation.
\section{Background}\label{background}
\indent There are a variety of ways that the Ducci function has been defined in the literature. Most papers will talk about $\bar{D}(x_1, x_2, ..., x_n)=(|x_1-x_2|, |x_2-x_3|, ..., |x_n-x_1|)$ as an endomorphism on $(\mathbb{Z}^+)^n$, with some examples being \cite{Ehrlich, Freedman, Glaser, Furno} or \cite{Breuer} defined it similarly on $\mathbb{Z}^n$. Some sources, like \cite{Brown, Chamberland,Schinzel}, define $\bar{D}$ as an endomorphism on $\mathbb{R}^n$ instead of on $(\mathbb{Z}^n)^+$. Of course, the results found for Ducci on $\mathbb{Z}^n$ are different than those on $\mathbb{R}^n$. 

\indent Focusing on the Ducci function defined on $\mathbb{Z}^n$ and $(\mathbb{Z}^+)^n$, it is well known that all Ducci sequences will eventually enter a cycle. \cite{Ehrlich,Glaser, Furno} have all talked about why this is the case and it was proved for Ducci on $\mathbb{R}^n$ by \cite{Schinzel} in Theorem 2. 
In addition to this, if $D^k(\mathbf{u})$ is in the Ducci cycle of $\mathbf{u} \in (\mathbb{Z}^+)^n$, then all of the entries in $D^k(\mathbf{u})$ belong to $\{0, c\}$ for some $c \in \mathbb{Z}^+$, as it was proved in Lemma 3 of \cite{Furno}. In addition to this, $D(\lambda \mathbf{u})=\lambda D(\mathbf{u})$ for $\lambda \in \mathbb{Z}$ and $\mathbf{u} \in \mathbb{Z}^n$, which makes the Ducci function defined on $\mathbb{Z}_2^n$ very important when it comes to studying the case defined on $\mathbb{Z}^n$, which is discussed in  \cite{Breuer, Ehrlich, Glaser, Furno}. Theorem 1 in \cite{Schinzel} proves a similar finding for Ducci on $\mathbb{R}^n$, which is that after a Ducci sequence reaches a limit point, all of the coordinates of the tuples of the sequence will either be $0$ or $c$ for some $c \in \mathbb{R}$.

\indent We now return to our case of Ducci on $\mathbb{Z}_m^n$. This case was first examined in \cite{Wong}, and was also explored in \cite{Breuer,Dular, Paper1}. First, $D \in End(\mathbb{Z}_m^n)$ which is how it was defined in Definition 1 of \cite{Breuer} and was also proved in \cite{Paper1}. Additionally from \cite{Paper1}, $D(\lambda \mathbf{u})=\lambda D(\mathbf{u})$ for this case as well. 

\indent Let $K(\mathbb{Z}_m^n)=\{\mathbf{u} \in \mathbb{Z}_m^n \mid \mathbf{u} \; \textit{is in the Ducci cycle for some tuple} \; \mathbf{v} \in \mathbb{Z}_m^n\}$ . This specific notation was first used in Definition 4 of \cite{Breuer} and then again in \cite{Paper1}. This set is significant because $K(\mathbb{Z}_m^n)$ is a subgroup of $\mathbb{Z}_m^n$ as stated in \cite{Breuer} and proved in Theorem 1 of \cite{Paper1}. 

\indent Let $H$ be the endomorphism of $\mathbb{Z}_m^n$ such that $H(x_1, x_2, ..., x_n)=(x_2, x_3, ..., x_n, x_1)$. If $I$ is the identity endomorphism on $\mathbb{Z}_m^n$, then $D=I+H$ and $D$ and $H$ commute, as it was first defined on page 302 in \cite{Ehrlich} and also used in \cite{Breuer, Dular, Glaser, Paper1}. Because of this, the Ducci cycle of $H(\mathbf{u})$ for $\mathbf{u} \in \mathbb{Z}_m^n$ is $\{H(D^k(\mathbf{u}))\}_{k=0}^{\infty}$ and if $\mathbf{u} \in K(\mathbb{Z}_m^n)$, then $H^s(\mathbf{u}) \in \mathbb{Z}_m^n$ for $0 \leq s <n$, which was seen in \cite{Paper1}.

\indent Suppose for a given tuple $\mathbf{u} \in \mathbb{Z}_m^n$, there exists $\mathbf{v} \in \mathbb{Z}_m^n$ such that $D(\mathbf{v})=\mathbf{u}$. Then we call $\mathbf{v}$ a \textbf{predecessor} of $\mathbf{u}$. The earliest instance of this term we found was on page 259 of \cite{Furno} and was also used in \cite{Breuer,Glaser, Paper1}.

\indent We now discuss primes, $p$, that satisfy the condition $2^{p-1} \equiv 1 \; \text{mod} \; p^2$. These are known as Wieferich primes, with \cite{Crandall} being one source that uses this name. As of the writing of this paper, only two Wieferich primes have been found less than $4.97*10^{17}$ as seen in \cite{OEIS}, which are $1093$ and $3511$. Note that if $o(m)$ is the order of $2$ mod $m$, then $o(p^3)=p*o(p)$ for $p \in \{1903, 3511\}$. Therefore, as of now, there are no primes yet found such that $2^{p-1} \equiv 1 \; \text{mod} \; p^j$ for $j \geq 3$.

\indent A question about Wieferich primes that is still unknown is whether or not there are an infinite number of Wieferich primes or not, with \cite{Crandall} asking on page 446 if there are even any larger than $3511$. For a number $m=2^lp_1^{k_1}p_2^{k_2} \cdots p_r^{k_r}$ where $p_i$ is a prime that is not Wieferich for $1 \leq i \leq r$, then assuming that Theorem \ref{Period_for_n=3} is true, $P_m(3)=lcm\{P_{p_i}(3) | 1 \leq i \leq r\}\prod_{i=1}^rp_i^{k_i-1}$. These questions about how many Wieferich primes there are leads to this question: How often will we need to make an exception for them when finding $P_m(3)$?

\indent Let us once more look at our example of the Ducci sequence of $(3,4,4) \in \mathbb{Z}_6^3$. To provide a better understanding of what Ducci sequences look like, we can create a transition graph that maps all of the sequences in $\mathbb{Z}_6^3$ and focus on the connected component that contains $(3,4,4)$, which we provide below.

\begin{figure}
\centering
\begin{adjustwidth}{-50 pt}{-50 pt}

\begin{tikzpicture}[node distance={30mm}, thick, main/.style = {draw, circle}]
\node[main](1){$(0,1,1)$};
\node[main](2)[above left of=1]{$(0,0,1)$};
\node[main](3)[right of=1]{$(1,2,1)$};
\node[main](4)[above right of=3]{$(3,4,4)$};
\node[main](5)[below right of=3]{$(3,3,2)$};
\node[main](6)[right of=5]{$(4,5,4)$};
\node[main](7)[below left of=5]{$(0,5,5)$};
\node[main](8)[below right of=7]{$(0,0,5)$};
\node[main](9)[left of=7]{$(5,4,5)$};
\node[main](10)[below left of=9]{$(3,2,2)$};
\node[main](11)[above left of=9]{$(3,3,4)$};
\node[main](12)[left of=11]{$(2,1,2)$};

\draw[->](2)--(1);
\draw[->](1)--(3);
\draw[->](4)--(3);
\draw[->](3)--(5);
\draw[->](6)--(5);
\draw[->](5)--(7);
\draw[->](8)--(7);
\draw[->](7)--(9);
\draw[->](10)--(9);
\draw[->](9)--(11);
\draw[->](12)--(11);
\draw[->](11)--(1);

\end{tikzpicture}
\end{adjustwidth}
\caption{Transition Graph for $\mathbb{Z}_6^3$}
\end{figure}
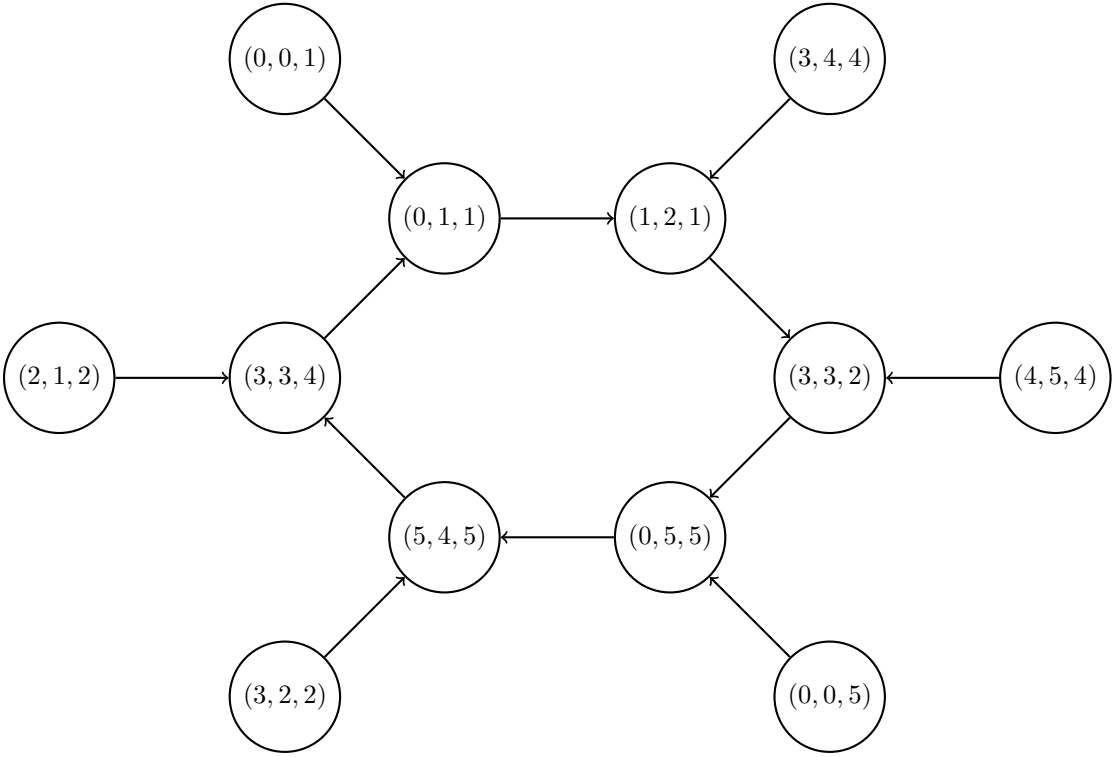

This component also contains $(0,0,1)$, which means this example provides us with the basic Ducci sequence for $\mathbb{Z}_6^3$. Taking advantage of the definitions given in Section \ref{intro}, $L_6(3)=1$, $Len(3,4,4)=1$, and $Len(1,2,1)=0$. Furthermore, $(0,0,1)$, $(3,4,4)$, and all of the tuples seen in this component have slightly different sequences, but they all have the same Ducci cycle. As a result, all of the tuples $\mathbf{u}$ in the component satisfy $Per(\mathbf{u})=6$. Since $(0,0,1)$ is among these tuples, $P_6(3)=6$.

\indent Note that every tuple in our graph that has a predecessor has exactly 2 predecessors. If $n$ is odd and $m$ is even, this is always the case:
\begin{theorem}\label{n odd m even pred}
For $n$ odd and $m$ even, every tuple that has a predecessor has exactly 2 predecessors. If one predecessor is $(x_1, x_2, ..., x_n)$, then the other predecessor is $(\frac{m}{2}+x_1,\frac{m}{2}+x_2,..., \frac{m}{2}+x_n)$.
\end{theorem}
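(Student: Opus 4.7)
The plan is to exploit the fact that $D$ is an endomorphism of $\mathbb{Z}_m^n$, so the set of predecessors of any tuple $\mathbf{u}$ is either empty or a coset of $\ker D$. Hence the theorem reduces to showing two things: first, that $|\ker D| = 2$ when $n$ is odd and $m$ is even, and second, that the nontrivial element of $\ker D$ is precisely $(\tfrac{m}{2}, \tfrac{m}{2}, \ldots, \tfrac{m}{2})$.

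First I would characterize $\ker D$ directly from the definition. A tuple $\mathbf{v} = (v_1, \ldots, v_n)$ lies in $\ker D$ if and only if $v_i + v_{i+1} \equiv 0 \pmod m$ for every $i$ (with indices taken cyclically mod $n$). This recurrence forces $v_{i+1} \equiv -v_i \pmod m$, so by induction $v_i \equiv (-1)^{i-1} v_1 \pmod m$. Since the indices are cyclic, closing the loop gives the compatibility condition $v_1 \equiv (-1)^n v_1 \pmod m$. Here is where the parity hypothesis enters: because $n$ is odd, this condition becomes $v_1 \equiv -v_1 \pmod m$, i.e., $2 v_1 \equiv 0 \pmod m$.

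Next I would use that $m$ is even to solve $2v_1 \equiv 0 \pmod m$. The two solutions in $\mathbb{Z}_m$ are $v_1 = 0$ and $v_1 = \tfrac{m}{2}$. In the first case, all $v_i = 0$. In the second case, $v_i \equiv (-1)^{i-1} \tfrac{m}{2} \pmod m$, and since $-\tfrac{m}{2} \equiv \tfrac{m}{2} \pmod m$, every entry is $\tfrac{m}{2}$. Thus $\ker D = \{\mathbf{0},\, (\tfrac{m}{2}, \ldots, \tfrac{m}{2})\}$ and has exactly two elements.

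Finally, I would conclude by the standard coset argument: if $\mathbf{v}$ is any predecessor of $\mathbf{u}$, then $\mathbf{w}$ is a predecessor of $\mathbf{u}$ exactly when $D(\mathbf{w} - \mathbf{v}) = \mathbf{0}$, i.e., when $\mathbf{w} - \mathbf{v} \in \ker D$. Therefore the set of predecessors of $\mathbf{u}$ is $\{\mathbf{v},\, \mathbf{v} + (\tfrac{m}{2}, \ldots, \tfrac{m}{2})\}$, which has exactly two elements and takes the claimed form. There is no serious obstacle in this proof; the only delicate point is the use of the parity of $n$ to convert the cyclic compatibility condition into the $2$-torsion equation $2 v_1 \equiv 0 \pmod m$, and the use of the parity of $m$ to guarantee that this equation has a nonzero solution.
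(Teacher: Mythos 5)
Your proof is correct, but it takes a genuinely different route from the paper. The paper argues directly with integer representatives: it first verifies by computation that $(\tfrac{m}{2}+x_1,\dots,\tfrac{m}{2}+x_n)$ has the same image as $(x_1,\dots,x_n)$, and then supposes two predecessors $(x_i)$ and $(y_i)$ of the same tuple, writes each congruence as an equation $x_i+x_{i+1}=y_i+y_{i+1}+\alpha_i m$ with $\alpha_i\in\{-1,0,1\}$, and eliminates variables by alternately subtracting and adding the equations around the cycle to reach $2x_1=2y_1+(\alpha_1-\alpha_2+\cdots+\alpha_n)m$, finishing with a parity case analysis on the alternating sum. You instead exploit the fact (recorded in Section 2 of the paper) that $D\in End(\mathbb{Z}_m^n)$, so the nonempty fibers of $D$ are cosets of $\ker D$, and you compute $\ker D=\{\mathbf{0},(\tfrac{m}{2},\dots,\tfrac{m}{2})\}$ from the recurrence $v_{i+1}\equiv -v_i$ and the cyclic closure condition $v_1\equiv(-1)^n v_1$. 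Your argument is shorter, avoids the bookkeeping with the $\alpha_i$ (where the paper's write-up has some sign typos in the intermediate equations), and makes transparent exactly where each parity hypothesis is used: $n$ odd turns the closure condition into $2v_1\equiv 0$, and $m$ even guarantees a nonzero solution. It also generalizes immediately to the other parity cases (e.g., $n$ and $m$ both odd forces a trivial kernel, hence unique predecessors), which the paper's representative-level computation does not give for free. The paper's approach, on the other hand, is self-contained at the level of modular arithmetic and does not lean on the homomorphism property.
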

\begin{proof}
Let $n$ be odd and $m$ be even. Notice
\[D(x_1, x_2, ..., x_n)=(x_1+x_2,x_2+x_3,...,x_n+x_1)\]
\[D(\frac{m}{2}+x_1,\frac{m}{2}+x_2,..., \frac{m}{2}+x_n)=(m+x_1+x_2,m+x_2+x_3,...,m+x_n+x_1)\]\[=(x_1+x_2,x_2+x_3,...,x_n+x_1)\]
So if $(x_1, x_2, ..., x_n)$ is a predecessor to a tuple, $(\frac{m}{2}+x_1,\frac{m}{2}+x_2,..., \frac{m}{2}+x_n)$ is a predecessor to that same tuple. Next we prove that if a tuple has a predecessor, it has exactly 2 predecessors.\\
\indent Suppose $\mathbf{u}=(z_1,z_2,...,z_n)$ has 2 predecessors $(x_1, x_2, ..., x_n)$ and $(y_1,y_2,...,y_n)$. Then we have $x_1+x_2 \equiv y_1+y_2 \; \text{mod} \; m,\; x_2+x_3 \equiv y_2+y_3 \; \text{mod} \; m \;, ...,  x_1+x_n \equiv y_1+y_n \; \text{mod} \; m$. Since the $x_i$ and $y_j$ are each at most $m-1$ for every $i,j, 1 \leq i,j \leq n$, we get 
\[x_1+x_2=y_1+y_2+ \alpha_1m\]
\[x_2+x_3=y_2+y_1+ \alpha_2m\]
\[\vdots\]
\[x_1+x_n=y_1+y_n+ \alpha_nm\]
where each of the $\alpha_i \in \{-1,0,1\}$ for $1 \leq i \leq n$. Subtracting the second equation from the first, we get $x_1-x_3=y_1-y_3+(\alpha_1-\alpha_2)m$. Adding this to the third equation and continuing this pattern, we get 
\[x_1+x_4=y_1+y_4(\alpha_1-\alpha_2+\alpha_3)m\]
\[\vdots\]
\[x_1-x_n=y_1-y_n+(\alpha_1-\alpha_2+ \cdots -\alpha_{n-1})m\]
Now if we add this to the equation $ x_1+x_n= y_1+y_n + \alpha_nm$, we get
\[2x_1=2y_1+(\alpha_1-\alpha_2+ \cdots +\alpha_n)m\]
We therefore have 2 possible cases:
\begin{itemize}
    \item Case 1: $\alpha_1-\alpha_2+ \cdots +\alpha_n$ is even\\
    Here $x_1=y_1+\beta m$ where $\beta \in \mathbb{Z}$. Since $y_1-m<x_1<y_1+m$, the only possibility then is that $x_1=y_1$.\\
    \item Case 2: $\alpha_1-\alpha_2+ \cdots +\alpha_n$ is odd\\
    Here, $x_1=y_1+\gamma m+\frac{m}{2}$ where $\gamma \in \mathbb{Z}$. The only possibility then is $x_1=y_1+\frac{m}{2}$, which is the previously discussed case of this theorem.\\
 
\end{itemize}
\indent If $x_1=y_1$, then $x_2=y_2, x_3=y_3, ..., x_n=y_n$. If $x_1=y_1+\frac{m}{2}$, then $x_2=y_2+\frac{m}{2}, x_3+ \frac{m}{2}=y_3,..., x_n=y_n+\frac{m}{2}$.\\
\indent The Theorem follows from here.
\end{proof}
\indent For our next example, consider the connected component of the transition graph that contains the basic Ducci sequence for $\mathbb{Z}_3^3$.
\begin{figure}
\centering
\begin{tikzpicture}[node distance={30mm}, thick, main/.style = {draw, circle}]
\node[main](1){$(0,0,1)$};
\node[main](2)[right of=1]{$(0,1,1)$};
\node[main](3)[below right of=2]{$(1,2,1)$};
\node[main](4)[below left of=3]{$(0,0,2)$};
\node[main](5)[left of=4]{$(0,2,2)$};
\node[main](6)[above left of=5]{$(2,1,2)$};

\draw[->](1)--(2);
\draw[->](2)--(3);
\draw[->](3)--(4);
\draw[->](4)--(5);
\draw[->](5)--(6);
\draw[->](6)--(1);
    
\end{tikzpicture}
\caption{Transition Graph for $\mathbb{Z}_3^3$}
\end{figure}
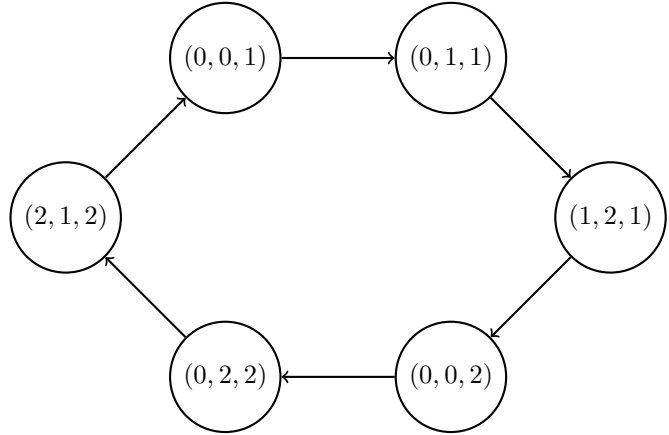
Because this is the basic Ducci sequence, it is evident that $P_3(3)=6$.
Note that every tuple, $\mathbf{u}$, in this component satisfies $Len(\mathbf{u})=0$ and $L_3(3)=0$. Since $(0,0,...,0,1)$ is in this component, we have $L_3(3)=0$, which implies $Len(\mathbf{u})=0$, $\mathbf{u} \in K(\mathbb{Z}_3^3)$ for every $\mathbf{u} \in \mathbb{Z}_3^3$, and $K(\mathbb{Z}_3^3)=\mathbb{Z}_3^3$. If $n,m$ are odd, then this is always true:
\begin{theorem}\label{n_odd_Length_0}
For $n$ odd and $m$ odd, $L_m(n)=0$.
\end{theorem}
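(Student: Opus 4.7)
The plan is to show that under the hypotheses $n$ odd and $m$ odd, the Ducci endomorphism $D$ is a bijection on $\mathbb{Z}_m^n$. Once this is established, $D$ is a permutation of the finite set $\mathbb{Z}_m^n$, so every element lies on a closed $D$-orbit and is therefore already in its own Ducci cycle. In particular the basic tuple $(0,\ldots,0,1)$ sits in a cycle with no pre-period, yielding $L_m(n)=0$ (and in fact $\mathrm{Len}(\mathbf u)=0$ for every $\mathbf u$, so $K(\mathbb{Z}_m^n)=\mathbb{Z}_m^n$).

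Since $\mathbb{Z}_m^n$ is finite, showing $D$ is bijective reduces to showing $D$ is injective, i.e.\ $\ker D = 0$. Suppose $D(x_1,\ldots,x_n)=\mathbf 0$. The defining equations $x_i+x_{i+1}\equiv 0 \pmod m$ (with indices cyclic) telescope: $x_{i+1}\equiv -x_i \pmod m$, hence $x_{i+1}\equiv (-1)^{i}x_1 \pmod m$ for $0\le i\le n-1$. The wraparound equation $x_n+x_1\equiv 0\pmod m$ then reads $(-1)^{n-1}x_1+x_1\equiv 0 \pmod m$, which for $n$ odd collapses to $2x_1\equiv 0\pmod m$. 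Because $m$ is odd, $2$ is a unit in $\mathbb{Z}_m$, forcing $x_1\equiv 0$ and then $x_i\equiv 0$ for all $i$. Thus $\ker D=0$.

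The only real friction is setting up the telescoping correctly and noticing that the parity of $n$ is exactly what makes the wraparound relation turn into $2x_1 \equiv 0$; the parity of $m$ is then exactly what makes $2$ invertible so that we conclude $x_1 = 0$. Both hypotheses are used in an essential way, and there is no hard step beyond this observation. One could alternatively phrase the argument as computing the determinant of the circulant matrix of $D$ as $\prod_{j=0}^{n-1}(1+\omega^j)=2$ when $n$ is odd, and noting this is a unit mod odd $m$; but the elementary kernel calculation above is quicker and self-contained.
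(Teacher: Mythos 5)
Your proof is correct, and it takes a genuinely different (and somewhat stronger) route than the paper's. The paper argues via surjectivity at a single point: it writes down the explicit predecessor $\left(\tfrac{m+1}{2},\tfrac{m-1}{2},\ldots,\tfrac{m-1}{2},\tfrac{m+1}{2}\right)$ of $(0,0,\ldots,0,1)$ and then invokes the maximality of $L_m(n)$ among all lengths to force $(0,\ldots,0,1)\in K(\mathbb{Z}_m^n)$ --- if the basic tuple were not yet in the cycle, its predecessor would have strictly larger length, a contradiction. You instead prove $\ker D=0$, so that $D$ is an automorphism of the finite group $\mathbb{Z}_m^n$; the telescoping $x_{i+1}\equiv(-1)^i x_1$ together with the wraparound relation $2x_1\equiv 0 \pmod m$ uses the parity of $n$ and the invertibility of $2$ modulo odd $m$ exactly where the paper's explicit predecessor uses them (that predecessor is just the particular solution of $D(\mathbf v)=(0,\ldots,0,1)$ obtained by inverting $2$). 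The two arguments are dual for a finite set --- injectivity versus surjectivity of $D$ --- but yours buys a little more up front: bijectivity immediately gives $\mathrm{Len}(\mathbf u)=0$ for every $\mathbf u$ and $K(\mathbb{Z}_m^n)=\mathbb{Z}_m^n$, which the paper has to extract afterwards as Corollary \ref{nm_odd_subgroup} from the inequality $L_m(n)\ge \mathrm{Len}(\mathbf u)$. Your circulant-determinant aside, $\det(I+H)=\prod_{j=0}^{n-1}(1+\omega^j)=1-(-1)^n=2$ for $n$ odd, is also correct and packages the same computation more conceptually.
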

\begin{proof}
For a given $n,m$, if $\mathbf{u}, \mathbf{v} \in \mathbb{Z}_m^n$ and $\mathbf{u}$ is a predecessor to $\mathbf{v}$, then either $\mathbf{u} \not \in K(\mathbb{Z}_m^n)$ and $Len(\mathbf{u})=Len(\mathbf{v})+1$, or $\mathbf{u} \in K(\mathbb{Z}_m^n)$ and $Len(\mathbf{u})=Len(\mathbf{v})=0$.

\indent Now let $n,m$ be odd and $\mathbf{u}=(\frac{m+1}{2},\frac{m-1}{2},\frac{m+1}{2},..., \frac{m-1}{2}, \frac{m+1}{2})$
Then $D(\mathbf{u})= (0,0,...,0,1)$. 
If $(0,0,..., 0,1) \not \in K(\mathbb{Z}_m^n)$, then $\mathbf{u} \not \in K(\mathbb{Z}_m^n)$ and $Len(\mathbf{u})>L_m(n)$. But this contradicts $L_m(n) \geq Len(\mathbf{u})$ for every $\mathbf{u} \in \mathbb{Z}_m^n$. Therefore, $(0,0,...,0,1) \in K(\mathbb{Z}_m^n)$ and $L_m(n)=0$.
\end{proof}
This theorem yields Proposition 6.1 in \cite{Dular} as a corollary, which says if $m,n$ are odd, then $K(\mathbb{Z}_m^n)=\mathbb{Z}_m^n$.
In Proposition 3.1 of \cite{Dular} it is proven that if $d|m$, then $P_d(n)|P_m(n)$. We wish to provide an alternative proof of this proposition with the next lemma, by additionally proving that there is a tuple in $\mathbb{Z}_m^n$ with period $P_d(m)$:
\begin{lemma}\label{divisor_divides_period}
Let $m,n$ be integers and suppose $m=dk$ for some $d,k \in \mathbb{Z}$. Then $Per(0,0,...,0,k)=P_d(m)$ where $(0,0,...,0,k) \in \mathbb{Z}_m^n$. Moreover, $P_d(n)|P_m(n)$.
\end{lemma}
\begin{proof}

\indent Let $m,n$ be integers and assume $m=dk$ for some $d,k \in \mathbb{Z}$. Let $l=L_d(n)$ and $x=P_d(n)$. 

\indent Now consider $(0,0,...,0,k) \in \mathbb{Z}_m^n$. Then 
\[D^{l+x}(0,0,...,k)=(ka_{l+x,n}, ka_{l+x,n-1}, ..., ka_{l+x,1})\]
\[=(ka_{l,n}, ka_{l,n-1}, ..., ka_{l,1})\]
Because $x=P_d(n)$ gives us that for every $s$, $1 \leq s \leq n$, we have 
\[a_{l+x,s} \equiv a_{l,s} \; \text{mod} \; d \]
\[ka_{l+x,s} \equiv ka_{l,s} \; \text{mod} \; m\]
Since $x$ is the smallest value of $y$ that we have $a_{l+y,s} \equiv a_{l,s} \; \text{mod} \; d$, we have $Per(0,0,...,0,k)=P_d(n)$. Therefore $Per(0,0,...,0,k)|P_m(n)$ gives us that $P_d(n)|P_m(n)$.

\end{proof}

\section{The coefficients $a_{r,s}$}\label{Findings_General}

     Consider $\mathbf{u}=(x_1, x_2, ..., x_n) \in \mathbb{Z}_m^n$. If we look at the first few tuples in the Ducci sequence of $\mathbf{u}$, we would have
     \[(x_1, x_2, ..., x_n)\]
     \[(x_1+x_2, x_2+x_3, ..., x_n+x_1)\]
     \[(x_1+2x_2+x_3, x_2+2x_3+x_4, ..., x_n+2x_1+x_2\]
     \[(x_1+3x_2+3x_3+x_4, x_2+3x_3+3x_4+x_5, ..., x_n+3x_1+3x_2+x_3)\]
     \[\vdots\]
     Note that the coefficient on $x_1$ in the first coordinate is the same as the coefficient on $x_2$ in the second coordinate of $D^r(\mathbf{u})$ for $0 \leq r \leq 3$. Similarly, the coefficient on $x_2$ in the first coordinate is the same as that on $x_3$ in the second coordinate. This pattern will continue, so we define $a_{r,s}$ to be the coefficient on $x_{s-i+1}$ in the $i$th coordinate of $D^r(\mathbf{u})$ where $r \geq 0$ and $1 \leq s \leq n$, which was first used on page 6 of \cite{Paper1}. So 
     \[D^r(x_1, x_2, ..., x_n)=(a_{r,1}x_1+a_{r,2}x_2+ \cdots +a_{r,n}, a_{r,n}x_1+a_{r,1}x_2+\cdots +a_{r,n-1}x_n,...,a_{r,2}x_1+a_{r,3}x_2+\cdots+a_{r,1}x_n)\]
     Also note that this means $D^r(0,0,...,0,1)=(a_{r,n},a_{r,n-1},...,a_{r,1})$. 

    \begin{theorem} \label{coefficients_sum_to_power_of_2}
    Let $r \geq 0$. Then
    \[\sum_{s=1}^n a_{r,s}=2^r\]
\end{theorem}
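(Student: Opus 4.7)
The plan is to proceed by induction on $r$, supplemented by the observation that evaluating the coordinate expression for $D^r$ at the all-ones tuple picks out precisely the sum $\sum_s a_{r,s}$. First I would establish the recurrence
\[a_{r+1,s} = a_{r,s-1} + a_{r,s}\]
(with indices read cyclically modulo $n$) by writing out $D^{r+1}(\mathbf{u}) = D(D^r(\mathbf{u}))$ coordinate by coordinate: the $i$th coordinate of $D^{r+1}(\mathbf{u})$ is the sum of the $i$th and $(i+1)$st coordinates of $D^r(\mathbf{u})$, and reading off the coefficient of $x_{s-i+1}$ on both sides gives the recurrence. With that in hand, the base case $r=0$ yields $a_{0,1}=1$ and $a_{0,s}=0$ for $s\geq 2$, so $\sum_s a_{0,s} = 1 = 2^0$, and the inductive step is a one-liner:
\[\sum_{s=1}^n a_{r+1,s} = \sum_{s=1}^n a_{r,s-1} + \sum_{s=1}^n a_{r,s} = 2\sum_{s=1}^n a_{r,s} = 2 \cdot 2^r = 2^{r+1}.\]

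Alternatively, and more slickly, one can bypass the recurrence entirely by evaluating the expansion
\[D^r(x_1,\ldots,x_n) = \Bigl(\textstyle\sum_{s=1}^n a_{r,s}x_s,\; \ldots\Bigr)\]
at the all-ones tuple $\mathbf{1} = (1,1,\ldots,1)$. The first coordinate of $D^r(\mathbf{1})$ is then exactly $\sum_{s=1}^n a_{r,s}$. On the other hand, $D(\mathbf{1}) = (2,2,\ldots,2) = 2\mathbf{1}$, so by the scalar-homogeneity of $D$ noted in the Background section an easy induction gives $D^r(\mathbf{1}) = 2^r\mathbf{1}$, whose first coordinate is $2^r$. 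Comparing the two expressions produces the identity immediately.

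There is no genuine obstacle here; the only place that demands care is matching the cyclic shift of indices when setting up the recurrence (so that $a_{r,s-1}$ comes from the second coordinate of $D^r$, not the $n$th). I expect the authors' proof to follow one of these two routes, most likely the induction on $r$ via the recurrence, since that recurrence will almost certainly be reused elsewhere in the paper.
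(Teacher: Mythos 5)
Your primary route is exactly the paper's proof: induction on $r$ using the recurrence $a_{r,s}=a_{r-1,s}+a_{r-1,s-1}$ (which the paper simply cites from an earlier work rather than rederiving) and the cyclic re-indexing of the sum to get $2^{r-1}+2^{r-1}=2^r$. Your alternative via $D^r(\mathbf{1})=2^r\mathbf{1}$ is also valid and arguably cleaner, but the induction is what the authors do.
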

\begin{proof}
    We prove this by induction with the basis case being $r=0$.\\
    \textbf{Inductive:} Assume that $\sum_{i=1}^n a_{r-1,s}=2^{r-1}$.  Using Theorem 5 of \cite{Paper1}, $a_{r,s}=a_{r-1,s}+a_{r-1,s-1}$ so
    \[\sum_{i=1}^n a_{r,s} =\sum_{i=1}^n a_{r-1,s}+a_{r-1, s-1}\]
    \[=\sum_{i=1}^n a_{r-1,s}+\sum_{i=1}^n a_{r-1,s-1}=\sum_{i=1}^n a_{r-1,s}+\sum_{i=1}^n a_{r-1, s}\]
    \[=2^{r-1}+2^{r-1}=2^r\]
    and the theorem follows. 
\end{proof}

\indent For the rest of the paper, we will work specifically in the case where $n=3$. For ease, we use $a_r=a_{r,1}, b_r=a_{r,2},$ and $c_r=a_{r,3}$ and define some lemmas for $a_r, b_r, c_r$ that are more specific to the $n=3$ case. Theorem 5 of \cite{Paper1} tells us that for all $n$, $a_{r+t,s}=\sum_{i=1}^n a_{t,i}a_{r,s-i+1}$ where $t \geq 0$. This gives us the following corollary for $n=3$. 

\begin{corollary} \label{coefficient_sum_n=3}
Let $n=3$. For $r> t >1$,
\[a_{r+t}=a_ta_{r}+b_tc_{r}+c_tb_{r}\]
\[b_{r+t}=a_tb_{r}+b_ta_{r}+c_tc_{r}\]
\[c_{r+t}=a_tc_{r}+b_tb_{r}+c_ta_{r}\]
\end{corollary}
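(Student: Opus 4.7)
The plan is to derive the corollary directly from the cited general identity
\[a_{r+t,s}=\sum_{i=1}^{n}a_{t,i}\,a_{r,s-i+1},\]
by specializing to $n=3$ and to each of the three values $s=1,2,3$, using the abbreviations $a_r=a_{r,1}$, $b_r=a_{r,2}$, $c_r=a_{r,3}$.

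The one conceptual point that needs to be made explicit is the convention on the second subscript. Because the Ducci function acts cyclically on the coordinates of $\mathbb{Z}_m^n$, the coefficient $a_{r,s}$ is naturally indexed by $s$ modulo $n$; that is, $a_{r,0}=a_{r,n}$ and, more generally, $a_{r,s}=a_{r,s\bmod n}$. With $n=3$ this identification gives $a_{r,0}=a_{r,3}=c_r$ and $a_{r,-1}=a_{r,2}=b_r$. I would state this cyclic convention at the beginning of the proof (it is already implicit in the display $D^r(x_1,x_2,x_3)=(a_rx_1+b_rx_2+c_rx_3,\,c_rx_1+a_rx_2+b_rx_3,\,b_rx_1+c_rx_2+a_rx_3)$ given just before the corollary).

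The remainder is then a direct substitution. Setting $s=1$ in the general identity yields
\[a_{r+t,1}=a_{t,1}a_{r,1}+a_{t,2}a_{r,0}+a_{t,3}a_{r,-1}=a_ta_r+b_tc_r+c_tb_r,\]
which is the first equality. Setting $s=2$ and $s=3$ gives the other two equalities in exactly the same way, with the indices $s-i+1$ cycling through $\{1,2,3\}$ in each case. No calculation beyond this straightforward substitution is required.

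The main (minor) obstacle is the bookkeeping on the cyclic subscripts; there is no genuine difficulty, but one must be careful that the hypothesis $r>t>1$ merely ensures that the general identity from \cite{Paper1} has already been established in the range of indices we need, and plays no other role. Consequently the proposed proof is just the three-line specialization described above, preceded by a sentence fixing the cyclic-subscript convention.
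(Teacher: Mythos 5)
Your proof is correct and is essentially the paper's own argument: the corollary is stated as an immediate specialization of the identity $a_{r+t,s}=\sum_{i=1}^{n}a_{t,i}a_{r,s-i+1}$ from \cite{Paper1} to $n=3$ and $s=1,2,3$, with the second subscript read cyclically modulo $3$. Your explicit note fixing the cyclic-index convention is a reasonable addition, but the substance matches what the paper intends.
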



\indent When $n=3$, $a_r, b_r,$ and $c_r$, are all very close in value, which is typically not the case when $n>3$. We can define the next lemma to be able to give $a_r, b_r$ and $c_r$ in terms of each other if we know the others:

 \begin{lemma}\label{coefficients_mod_6}
 

Let $n=3$. Then
\begin{itemize}
    \item If $r \equiv 0 \; \text{mod} \; 6$, $a_r=b_r+1=c_r+1$.\\
    \item If $r \equiv 1 \; \text{mod} \; 6$, then $c_r=a_r-1=b_r-1$.\\
    \item If $r \equiv 2 \; \text{mod} \; 6$, then $b_r=a_r+1=c_r+1$.\\
    \item If $r \equiv 3 \; \text{mod} \; 6$, then $a_r=b_r-1=c_r-1$.\\
    \item If $r \equiv 4 \; \text{mod} \; 6$, then $c_r=a_r+1=b_r+1$.\\
    \item If $r \equiv 5 \; \text{mod} \; 6$, then $b_r=a_r-1=c_r-1$.
    
\end{itemize}

\end{lemma}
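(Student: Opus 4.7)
The plan is to proceed by induction on $r$, using the three specialized recurrences
\[
a_r = a_{r-1}+c_{r-1}, \qquad b_r = a_{r-1}+b_{r-1}, \qquad c_r = b_{r-1}+c_{r-1},
\]
obtained by applying the general relation $a_{r,s}=a_{r-1,s}+a_{r-1,s-1}$ (cited in the proof of Theorem \ref{coefficients_sum_to_power_of_2}) with $n=3$ and cyclic indices mod $3$. The base case $r=0$ is immediate since $D^0(0,0,1)=(0,0,1)$ yields $a_0=1$, $b_0=c_0=0$, which matches the first bullet.

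For the inductive step, I would assume the stated relationship for $r-1$ and derive the one for $r$ by plugging the hypothesis into the three recurrences. This produces six nearly identical subcases, one for each residue class mod $6$. For instance, suppose $r\equiv 1 \pmod 6$, so $r-1\equiv 0 \pmod 6$; then by hypothesis $b_{r-1}=c_{r-1}$ and $a_{r-1}=c_{r-1}+1$, and the recurrences give $a_r=2c_{r-1}+1$, $b_r=2c_{r-1}+1$, and $c_r=2c_{r-1}$, so $c_r=a_r-1=b_r-1$ as required. The other five cases proceed the same way: at each step exactly two of the three coefficients coincide and the third differs by $1$, with the distinguished coordinate rotating in the fixed cyclic order $a \to \{a,b\} \to b \to \{b,c\} \to c \to \{a,c\} \to a$ dictated by the asymmetry of the recurrences (note that $a_r$ depends on $c_{r-1}$ rather than $b_{r-1}$, which is what drives the direction of rotation).

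There is no real mathematical obstacle; the argument is a routine bookkeeping induction. The main effort is laying out the six cases in a disciplined way so that the next residue class is correctly inherited and the pattern is visibly self-sustaining. One could shorten the exposition by phrasing the lemma in terms of the $3$-cycle permuting $\{a,b,c\}$ and proving a single unified statement, but given the explicit bullet-list format of the lemma, the cleanest presentation is simply to verify each of the six cases in succession.
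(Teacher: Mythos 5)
Your proposal is correct and follows essentially the same route as the paper: induction on $r$ using the specialized recurrences $a_r=a_{r-1}+c_{r-1}$, $b_r=a_{r-1}+b_{r-1}$, $c_r=b_{r-1}+c_{r-1}$, with six residue-class cases (the paper tabulates $0\le r\le 5$ as its basis, but as you note the single base case $r=0$ suffices since each step descends only by one).
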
   
\begin{proof}
    We prove this via induction where the basis case is $0 \leq r \leq 5$:
    \begin{center}
        \begin{tabular}{|c| c| c| c|} 
        \hline
        $r$ & $a_r$ & $b_r$ & $c_r$\\
         \hline  \hline
        0 & 1 & 0 & 0\\
         \hline
        1 & 1 & 1 & 0\\
         \hline
        2 & 1 & 2 & 1\\
         \hline
        3 & 2 & 3 & 3\\
         \hline
        4 & 5 & 5 &6\\
         \hline
        5 & 11 & 10 &11\\
         \hline
        \end{tabular}
    \end{center}
    
Assume that the Lemma is true for $r'<r$ and let $l \in \mathbb{Z}^+$.
For $r \equiv 0 \; \text{mod} \; 6$, write $r=6l$:
\[a_{6l}=a_{6l-1}+c_{6l-1}=b_{6l-1}+1+c_{6l-1}=b_{6l}+1\]
For $r \equiv 1 \; \text{mod} \; 6$, write $r=1+6l$:
\[c_{1+6l}=c_{6l}+b_{6l}=c_{6l}+a_{6l}-1=a_{6l}-1\]
For $r \equiv 2 \; \text{mod} \; 6$, write $r=2+6l$:
\[b_{6l+2}=a_{1+6l}+b_{1+6l}=a_{1+6l}+c_{1+6l}+1=a_{2+6l}+1\]
For $r \equiv 3 \; \text{mod} \; 6$, write $r=3+6l$:
\[a_{3+6l}=a_{2+6l}+c_{2+6l}=a_{2+6l}+b_{2+6l}-1=b_{3+6l}-1\]
For $r \equiv 4 \; \text{mod} \; 6$, write $r=4+6l$:
\[c_{4+6l}=c_{3+6l}+b_{3+6l}=c_{3+6l}+a_{3+6l}+1=a_{4+6l}+1\]
For $r \equiv 5 \; \text{mod} \; 6$, write $r=5+6l$:
\[b_{6l+5}=a_{4+6l}+b_{4+6l}=a_{4+6l}+c_{4+6l}-1=a_{4+6l}-1\]

\end{proof}

\indent We can go further into the relationships that $a_r, b_r, c_r$ have with each other for a given $r$, namely that $a_{r+1}, b_{r+1}, c_{r+1}$ are roughly double the $a_r, b_r, c_r$. We explore this in the next lemma.

\begin{lemma}\label{evenseqcoefficient}
 For $l \geq 2$, 
    \[l \equiv 0 \; \text{mod} \; 3, \; a_l=2a_{l-1}=2c_{l-1}\]
    \[l \equiv 1 \; \text{mod} \; 3, \; c_l=2c_{l-1}=2b_{l-1}\]
    \[l \equiv 2 \; \text{mod} \; 3, \; b_l=2b_{l-1}=2a_{l-1}\]
\end{lemma}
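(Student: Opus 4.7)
The plan is to combine Lemma \ref{coefficients_mod_6} with the elementary recursion
\[
a_r = a_{r-1} + c_{r-1}, \qquad b_r = b_{r-1} + a_{r-1}, \qquad c_r = c_{r-1} + b_{r-1},
\]
which comes directly from applying $D$ to $D^{r-1}(0,0,1) = (c_{r-1}, b_{r-1}, a_{r-1})$. The key observation powering the proof is that Lemma \ref{coefficients_mod_6}, read the other way, tells us that for every $r$ two of the three coefficients $a_r, b_r, c_r$ are equal while the third differs by $1$. So whenever the recursion adds the ``equal pair,'' the result is exactly twice one of the coefficients, which is the shape of the identity we want.

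I would proceed by case analysis on $l \bmod 3$, with each case splitting into two subcases depending on $l \bmod 6$. For $l \equiv 0 \pmod 3$, we have $l-1 \equiv 5 \pmod 6$ or $l-1 \equiv 2 \pmod 6$; in both subcases Lemma \ref{coefficients_mod_6} yields $a_{l-1} = c_{l-1}$, and the recursion $a_l = a_{l-1} + c_{l-1}$ immediately gives $a_l = 2a_{l-1} = 2c_{l-1}$. For $l \equiv 1 \pmod 3$, we have $l-1 \equiv 0$ or $3 \pmod 6$; both subcases give $b_{l-1} = c_{l-1}$, so $c_l = c_{l-1} + b_{l-1} = 2c_{l-1} = 2b_{l-1}$. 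For $l \equiv 2 \pmod 3$, we have $l-1 \equiv 1$ or $4 \pmod 6$; both give $a_{l-1} = b_{l-1}$, so $b_l = b_{l-1} + a_{l-1} = 2b_{l-1} = 2a_{l-1}$.

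No induction is required beyond what is already built into Lemma \ref{coefficients_mod_6}; the argument is a direct consequence of matching the right recursion identity to the right equality pair. The ``obstacle,'' such as it is, is purely bookkeeping: keeping straight which of $a,b,c$ is the odd one out for each residue class mod $6$ and confirming that the coefficient named in the conclusion matches the pair that gets summed by the recursion. Once the table of pairwise equalities extracted from Lemma \ref{coefficients_mod_6} is laid out, each of the six subcases becomes a one-line computation.
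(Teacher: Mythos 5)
Your proof is correct and is essentially identical to the paper's: both combine the one-step recursion $a_l=a_{l-1}+c_{l-1}$, $b_l=b_{l-1}+a_{l-1}$, $c_l=c_{l-1}+b_{l-1}$ with the pairwise equalities extracted from Lemma \ref{coefficients_mod_6} to see that each sum doubles a single coefficient. The case bookkeeping you describe matches the paper's, so nothing further is needed.
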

\begin{proof}
If the first equalities are true, then the second equalities will follow from Lemma \ref{coefficients_mod_6}.
    Lemma \ref{coefficients_mod_6} further gives us 
    \begin{itemize}
        \item $l \equiv 0 \; \text{mod} \; 3:$
        \[a_l=a_{l-1}+c_{l-1}=a_{l-1}+a_{l-1}=2a_{l-1}\]
         \item $l \equiv 1 \; \text{mod} \; 3:$
        \[c_l=c_{l-1}+b_{l-1}=c_{l-1}+c_{l-1}=c_{l-1}\]
         \item $l \equiv 2 \; \text{mod} \; 3:$
        \[b_l=b_{l-1}+a_{l-1}=b_{l-1}+b_{l-1}=2b_{l-1}\]
    \end{itemize}
\end{proof}


\indent We can now use the information we have about the $a_r, b_r, c_r$ to gain a result that will tell us about the value of $P_m(3)$.

\begin{theorem}\label{6_divides_Period}
For $m \geq 3$, $6|P_m(3)$
\end{theorem}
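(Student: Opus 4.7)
My plan is to combine Lemma \ref{coefficients_mod_6} with the identity $D^r(0,0,1)=(c_r,b_r,a_r)$, so that the period condition on the basic Ducci sequence translates into a congruence on the differences $a_r-b_r$ and $b_r-c_r$, which I can then force to have period $6$ by inspection.

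First I would read off Lemma \ref{coefficients_mod_6} to record the exact integer value of the pair $(a_r-b_r,\; b_r-c_r)$ as a function of $r\bmod 6$. The six cases of the lemma give exactly the six pairs
\[(1,0),\ (0,1),\ (-1,1),\ (-1,0),\ (0,-1),\ (1,-1)\]
for $r\equiv 0,1,2,3,4,5\pmod 6$, respectively. Since every coordinate lies in $\{-1,0,1\}$, the hypothesis $m\geq 3$ ensures that the residues $-1$, $0$, $1$ are pairwise distinct modulo $m$, and therefore these six pairs remain pairwise distinct when reduced modulo $m$. This is the crucial observation that fails at $m=2$.

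Next I would invoke the definition of $P_m(3)$ applied to the basic sequence: there exists $l\geq L_m(3)$ with $D^{l+P_m(3)}(0,0,1)\equiv D^{l}(0,0,1)\pmod m$. Reading coordinates gives $a_{l+P_m(3)}\equiv a_l$, $b_{l+P_m(3)}\equiv b_l$, and $c_{l+P_m(3)}\equiv c_l$ modulo $m$, which on subtraction yields
\[\bigl(a_{l+P_m(3)}-b_{l+P_m(3)},\; b_{l+P_m(3)}-c_{l+P_m(3)}\bigr)\equiv (a_l-b_l,\; b_l-c_l)\pmod m.\]
Combining this with the injectivity of the reduction on the six difference pairs, I conclude $l+P_m(3)\equiv l\pmod 6$, i.e., $6\mid P_m(3)$.

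The main obstacle is essentially the bookkeeping in the first step: one has to correctly extract the six pairs from the six congruence classes of Lemma \ref{coefficients_mod_6} and verify they are genuinely distinct, not merely distinct in a single coordinate. Once that is in hand, the hypothesis $m\geq 3$ does all the remaining work by preserving distinctness under reduction, which is exactly consistent with the exceptional case $P_2(3)=3$ visible from the direct computation of the basic Ducci sequence in $\mathbb{Z}_2^3$.
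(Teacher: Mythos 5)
Your proposal is correct and is essentially the paper's own argument: both rest on Lemma \ref{coefficients_mod_6}, translate the periodicity of the basic sequence into congruences among $a_r,b_r,c_r$, and use that $-1,0,1$ are distinct modulo $m\geq 3$ to force $6\mid P_m(3)$. The only difference is organizational --- you phrase it as injectivity of $r\bmod 6\mapsto(a_r-b_r,\,b_r-c_r)$ over all six residues, while the paper fixes $l\equiv 0\pmod 6$ and runs a case analysis on $P_m(3)\bmod 6$; the substance is the same.
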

\begin{proof}
Take $l \in \mathbb{Z}^+ \cup \{0\}$ large enough so that $D^l(0,0,1)=(c_l, b_l, a_l) \in K(\mathbb{Z}_m^3)$ and $l \equiv 0 \; \text{mod} \; 6$. Let $x=P_m(3)$. Then we should have that 
\[a_{l+x} \equiv a_l \; \text{mod} \; m\]
\[b_{l+x} \equiv b_l \; \text{mod} \; m\]
\[c_{l+x} \equiv c_l \; \text{mod} \; m\]
Notice that $l \equiv 0 \; \text{mod} \; 6$ gives us that $b_l=c_l=a_l-1$. If $x \not\equiv 0 \; \text{or} \; 3 \; \text{mod} \; 6$, then 
\[c_{l+x} \neq b_{l+x} \equiv b_l \; \text{mod} \; m \equiv c_l \; \text{mod} \; m\]
Which implies $c_{l+x} \not \equiv c_l \; \text{mod} \; m$ which contradicts $x=P_m(3)$.\\
\indent If we have that $x \equiv 3 \; \text{mod} \; 6$, and $m>2$ then 
\[a_{l+x}+1=b_{l+x} \equiv b_l \; \text{mod} \; m \equiv a_l-1 \; \text{mod} \; m\]
which implies $a_{l+x} \not \equiv a_l \; \text{mod} \; m$ which again contradicts $x=P_m(3)$. Therefore $6|P_m(3)$ for $m>2$.
\end{proof}

\section{Value of $P_m(3)$}\label{Period_Section}
\indent There are still a few more lemmas and theorems that we will need in order to prove Theorem \ref{Period_for_n=3}.
\begin{theorem}\label{Overall_Length}
Let $n=3$. If $m=2^lm_1$ where $m_1$ is odd, then $L_m(3)=l$.
\end{theorem}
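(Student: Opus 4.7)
The plan is to reduce to the prime-power case $m = 2^l$ via the Chinese Remainder Theorem, then to identify $K(\mathbb{Z}_{2^l}^3)$ as the sum-zero subgroup using the coordinate-sum homomorphism. Since $\gcd(2^l, m_1) = 1$, the isomorphism $\mathbb{Z}_m \cong \mathbb{Z}_{2^l} \times \mathbb{Z}_{m_1}$ extends componentwise to $\mathbb{Z}_m^3 \cong \mathbb{Z}_{2^l}^3 \times \mathbb{Z}_{m_1}^3$ and intertwines $D$ on both sides. Thus Ducci sequences, cycles, and the set $K$ split as direct products. Corollary \ref{nm_odd_subgroup} gives $K(\mathbb{Z}_{m_1}^3) = \mathbb{Z}_{m_1}^3$, so the odd-part projection of $(0,0,1)$ is already cyclic at step $0$. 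This reduces the theorem to showing $L_{2^l}(3) = l$; the degenerate $l = 0$ case is already subsumed by the odd case.

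For $m = 2^l$ with $l \geq 1$, I would work with the sum homomorphism $\sigma(x_1, x_2, x_3) = x_1 + x_2 + x_3$ on $\mathbb{Z}_{2^l}^3$. A direct calculation gives $\sigma(D(\mathbf{u})) = 2\sigma(\mathbf{u})$, and hence by iteration $\sigma(D^r(\mathbf{u})) = 2^r \sigma(\mathbf{u})$, which applied to $(0,0,1)$ recovers Theorem \ref{coefficients_sum_to_power_of_2}. Let $S_0 = \ker \sigma$. If $\mathbf{u} \in K(\mathbb{Z}_{2^l}^3)$, then $D^p(\mathbf{u}) = \mathbf{u}$ for some $p \geq 1$, so $(2^p - 1)\sigma(\mathbf{u}) \equiv 0 \pmod{2^l}$. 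Since $2^p - 1$ is odd and hence a unit in $\mathbb{Z}_{2^l}$, we get $\sigma(\mathbf{u}) = 0$, establishing $K(\mathbb{Z}_{2^l}^3) \subseteq S_0$.

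The central step is the reverse inclusion. Note $D(S_0) \subseteq S_0$ because $\sigma \circ D = 2\sigma$. I would then show $D|_{S_0}$ is injective by computing $\ker D$ directly: $D(\mathbf{x}) = 0$ forces $x_1 = x_2 = x_3$ with $2x_1 \equiv 0 \pmod{2^l}$, so $\ker D = \{(0,0,0), (2^{l-1}, 2^{l-1}, 2^{l-1})\}$. The nontrivial element has coordinate sum $3 \cdot 2^{l-1} \equiv 2^{l-1} \not\equiv 0 \pmod{2^l}$, so $\ker D \cap S_0 = \{0\}$. Hence $D|_{S_0}$ is an injective self-map of a finite set, so it is a permutation of $S_0$, placing every element of $S_0$ on a cycle, and $K(\mathbb{Z}_{2^l}^3) = S_0$.

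Combining these pieces, $D^r(0,0,1) \in K(\mathbb{Z}_{2^l}^3)$ if and only if $\sigma(D^r(0,0,1)) = 2^r \equiv 0 \pmod{2^l}$, which holds precisely when $r \geq l$. Therefore $L_{2^l}(3) = l$, and by the CRT reduction $L_m(3) = l$. I expect the main obstacle to be not computational but conceptual: recognizing that $K(\mathbb{Z}_{2^l}^3)$ coincides with the sum-zero subgroup $S_0$, and using the short kernel-intersection argument to upgrade the easy necessary condition $\sigma(\mathbf{u}) = 0$ to a sufficient one.
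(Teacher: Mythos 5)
Your proof is correct, but it takes a genuinely different route from the paper's. The paper proves the statement by induction on $l$: it assumes $L_{2^{l-1}m_1}(3)=l-1$, and then uses the doubling relations of Lemma \ref{evenseqcoefficient} (e.g.\ $a_l=2a_{l-1}$) to lift the congruences $a_{l-1+6j,s}\equiv a_{l-1,s}$ and $a_{l-2+6j,s}\not\equiv a_{l-2,s}$ from modulus $2^{l-1}m_1$ to modulus $2^lm_1$, handling the odd and even parts of $m$ together throughout. You instead split off the odd part by CRT (where Corollary \ref{nm_odd_subgroup} finishes immediately) and then identify $K(\mathbb{Z}_{2^l}^3)$ outright as the sum-zero subgroup $S_0=\ker\sigma$, using $\sigma\circ D=2\sigma$ for one inclusion and the computation $\ker D=\{(0,0,0),(2^{l-1},2^{l-1},2^{l-1})\}$ with $\ker D\cap S_0=\{0\}$ for the other, so that $D^r(0,0,1)\in K$ becomes exactly the condition $2^l\mid 2^r$. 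Your approach buys a clean structural description of $K(\mathbb{Z}_{2^l}^3)$ that the paper never states explicitly, and it eliminates essentially all of the coefficient bookkeeping; the paper's induction, by contrast, generates along the way the explicit congruences on $a_r$, $b_r$, $c_r$ and the divisibility $P_{2^{l-1}m_1}(3)\mid P_m(3)$ that get reused in the proof of the main theorem. All the individual steps you give check out: the kernel element $(2^{l-1},2^{l-1},2^{l-1})$ has coordinate sum $2^{l-1}\not\equiv 0 \pmod{2^l}$, so $D|_{S_0}$ is an injective endomorphism of a finite group and hence a permutation, and the length of $(0,0,1)$ over $\mathbb{Z}_m$ is the maximum of its lengths over the two CRT factors.
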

\begin{proof}
We prove this via induction on $l$.\\
\textbf{Basis} $\mathbf{l=0}$: In this case, $m$ is odd and $L_m(3)=0$ by Theorem \ref{n_odd_Length_0}.\\
\textbf{Inductive:} Assume that for $l'<l$ and $m'=2^{l'}m_1$, $L_{m'}(3)=l'$ where $m_1$ is odd. 
Let $x=P_{2^{l-1}m_1}(3)$. By induction, $L_{2^{l-1}m_1}(3)=l-1$. This means that in $\mathbb{Z}_{2^{l-1}m_1}^3$, $D^{l-1+x}(0,0,1)=D^{l-1}(0,0,1)$, or that for $s, 1 \leq s \leq 3$
\[a_{l-1+x,s} \equiv a_{l-1,s} \; \text{mod} \; 2^{l-1}m_1\]
By Lemma \ref{divisor_divides_period} and Theorem \ref{6_divides_Period} $P_{2^{l-1}m_1}(3)|P_m(3)=6j$ for some $j \in \mathbb{Z}^+$. By induction, $L_{2^{l-1}m_1}(3)=l-1$ implies $D^{l-2}(0,0,1)=(c_{l-2},b_{l-2},a_{l-2}) \not \in K(\mathbb{Z}_{2^{l-1}m_1}^3)$ but $D^{l-1}(0,0,1)=(c_{l-1},b_{l-1},a_{l-1}) \in K(\mathbb{Z}_{2^{l-1}m_1}^3)$. For $1 \leq s \leq 3$, we get
\[a_{l-2+6j,s} \not \equiv a_{l-2,s} \; \text{mod} \; 2^{l-1}m_1\]
\[a_{l-1+6j,s} \equiv a_{l-1,s} \; \text{mod} \; 2^{l-1}m_1\]
Suppose $l \equiv 0 \; \text{mod} \; 3$. By Lemma \ref{evenseqcoefficient}, 
$a_{l}=2a_{l-1}$ and $a_{l+6j}=2a_{l+6j-1}$. Then
\[a_{l-1+6j} \equiv a_{l-1} \; \text{mod} \; 2^{l-1}m_1\]
\[2a_{l-1+6j} \equiv 2a_{l-1} \; \text{mod} \; 2^lm_1\]
\[a_{l+6j} \equiv a_l \; \text{mod} \; 2^lm_1\]
Here, $l \equiv 0 \; \text{mod} \; 3$, which gives $b_l=c_l=a_l \pm 1$ and 
\[b_{l+6j}=a_{l+6j} \pm 1\equiv a_l \pm 1\; \text{mod} \; 2^lm_1 \equiv b_l \; \text{mod} \; m\]
and similarly $c_{l+6j} \equiv c_l \; \text{mod} \; m$. We also have $b_{l-1}=2b_{l-2}$ which gives us
\[b_{l-2+6j} \not \equiv b_{l-2} \; \text{mod} \; 2^{l-1}m_1\]
\[2b_{l-2+6j} \not \equiv 2b_{l-2} \; \text{mod} \; 2^lm_1\]
\[b_{l-1+6j} \not \equiv b_{l-1} \; \text{mod} \; 2^lm_1\]
The proofs for $l \equiv 1,2 \; \text{mod} \; 3$ are similar. It then follows then that $L_m(3)=l$.
\end{proof}

\indent Although \cite{Dular} proved $(2)$ of Theorem \ref{Period_for_n=3}, we wish to provide an alternative proof. To give this proof, we will need the following lemma:

\begin{lemma}\label{HclosedTheorem2}
    Let $n=3$ and $m=2^l$ for $l>0$. If $\mathbf{u} \in K(\mathbb{Z}_m^3)$, then $D^2(\mathbf{u})=H(\mathbf{u})$.
\end{lemma}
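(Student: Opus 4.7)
The plan is to reduce the identity $D^2(\mathbf{u})=H(\mathbf{u})$ to a single congruence about the coordinate sum, and then use the cycle condition to force that sum to be zero modulo $m$. First I would compute $D^2$ directly: applying $D$ twice to $(x_1,x_2,x_3)$ gives
\[
D^2(x_1,x_2,x_3)=(x_1+2x_2+x_3,\;x_1+x_2+2x_3,\;2x_1+x_2+x_3),
\]
while $H(\mathbf{u})=(x_2,x_3,x_1)$. Subtracting componentwise shows that $D^2(\mathbf{u})-H(\mathbf{u})=(S,S,S)$ where $S=x_1+x_2+x_3$. Thus the lemma is equivalent to the statement that every tuple in $K(\mathbb{Z}_m^3)$ has coordinate sum divisible by $m$.

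To prove this, I would observe that $D$ doubles the coordinate sum: if $\sigma(\mathbf{v})=v_1+v_2+v_3$, then $\sigma(D(\mathbf{v}))=2\sigma(\mathbf{v})$. Now let $\mathbf{u}\in K(\mathbb{Z}_m^3)$, and let $k=Per(\mathbf{u})\geq 1$ so that $D^k(\mathbf{u})=\mathbf{u}$. Iterating the doubling relation yields $\sigma(\mathbf{u})=\sigma(D^k(\mathbf{u}))\equiv 2^k\sigma(\mathbf{u})\pmod{m}$, i.e.\ $(2^k-1)\sigma(\mathbf{u})\equiv 0\pmod{m}$. Because $m=2^l$ and $2^k-1$ is odd, $2^k-1$ is a unit in $\mathbb{Z}_m$, forcing $\sigma(\mathbf{u})\equiv 0\pmod{m}$. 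Combining this with the first paragraph gives $D^2(\mathbf{u})\equiv H(\mathbf{u})\pmod{m}$ coordinatewise, which is the claim.

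There is no real obstacle here: the argument is essentially a two-line observation once one sees that the question reduces to the behaviour of the coordinate sum. The only point that deserves a sentence of care is ensuring that $\mathbf{u}\in K(\mathbb{Z}_m^3)$ really does give $D^k(\mathbf{u})=\mathbf{u}$ for some $k\geq 1$ (as opposed to merely $D^{l+k}(\mathbf{u})=D^l(\mathbf{u})$); this is immediate from the definition of $K(\mathbb{Z}_m^3)$ as the set of tuples lying on some Ducci cycle, so any such tuple is itself periodic under $D$.
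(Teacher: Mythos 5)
Your proof is correct, and it takes a genuinely different route from the paper's. You observe that $D^2-H$ sends $(x_1,x_2,x_3)$ to the constant tuple $(S,S,S)$ with $S=x_1+x_2+x_3$, reducing the lemma to showing that every tuple in $K(\mathbb{Z}_{2^l}^3)$ has coordinate sum divisible by $2^l$; you then get this from the facts that $D$ doubles the coordinate sum and that $2^k-1$ is a unit modulo $2^l$. The one point needing care, which you correctly flag, is that membership in $K(\mathbb{Z}_m^3)$ yields genuine periodicity $D^k(\mathbf{u})=\mathbf{u}$ with $k\geq 1$. The paper instead uses Theorem \ref{Overall_Length} to realize every cycle element as $D^l(\mathbf{v})$ and then verifies, via the composition formulas of Corollary \ref{coefficient_sum_n=3}, that $a_{l+2}\equiv c_l$, $b_{l+2}\equiv a_l$, and $c_{l+2}\equiv b_l \pmod{2^l}$, the key input being $a_l+b_l+c_l=2^l$ from Theorem \ref{coefficients_sum_to_power_of_2}. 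Both arguments ultimately exploit the same fact --- the coordinate sum of anything on a cycle vanishes modulo $2^l$ because $D$ multiplies that sum by $2$ --- but yours is more self-contained and conceptual: it needs neither the value of $L_{2^l}(3)$ nor the coefficient machinery, and it makes transparent that $D^2=H$ holds on the entire subgroup $\{\mathbf{u}\in\mathbb{Z}_{2^l}^3 : x_1+x_2+x_3\equiv 0 \bmod 2^l\}$, of which $K(\mathbb{Z}_{2^l}^3)$ is a subgroup.
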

\begin{proof}
    Let $m=2^l$. We know $L_m(3)=l$ by Theorem \ref{Overall_Length}. It suffices to show $D^{l+2}(\mathbf{u})=H(D^l(\mathbf{u}))$ for every $\mathbf{u} \in \mathbb{Z}_m^3$. We can prove this by showing 
    \[a_{l+2} \equiv c_l \; \text{mod} \; 2^l\]
    \[b_{l+2} \equiv a_l \; \text{mod} \; 2^l\]
    \[c_{l+2} \equiv b_l \; \text{mod} \; 2^l\]
    Notice 
    \[a_{l+2}=a_la_2+b_lc_2+c_lb_2\]
    \[=a_l+b_l+2c_l\]
    \[=(a_l+b_l+c_l)+c_l\]
    \[=2^l+c_l\]
    \[\equiv c_l \; \text{mod} \; m\]
    Similarly, 
    \[b_{l+2}=a_lb_2+b_la_2+c_lc_2\]
    \[=2a_l+b_l+c_l\]
    \[=(a_l+b_l+c_l)+a_l\]
    \[=2^l+a_l\]
    \[\equiv a_l \; \text{mod} \; m\]
    and 
    \[c_{l+2}=a_lc_2+b_lb_2+c_la_2\]
    \[=a_l+2b_l+c_l\]
    \[=(a_l+b_l+c_l)+b_l\]
    \[=2^l+b_l\]
    \[\equiv b_l \; \text{mod} \; m\]
    The lemma follows from here.
\end{proof}

\indent We need one last lemma to prove Theorem \ref{Period_for_n=3}. This lemma will allow us to break down $b_r$ for certain values of $r \in \mathbb{Z}^+$.

\begin{lemma}\label{break_up_bs}
Assume $k \equiv 0 \; \text{mod} \; 6$. Then 
\[b_{jk}=\sum_{i=0}^{j-1} 3^{j-i-1}\binom{j}{j-i}b_k^{j-i}\]
\end{lemma}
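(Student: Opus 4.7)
The plan is to induct on $j$. The core of the argument is to derive a simple linear recurrence $b_{(j+1)k} = (3b_k + 1)\,b_{jk} + b_k$ and then to verify that the claimed closed form satisfies this recurrence.

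Since $k \equiv 0 \pmod{6}$, every multiple $jk$ is also congruent to $0$ modulo $6$, so Lemma \ref{coefficients_mod_6} yields
\[ a_k = b_k + 1, \quad c_k = b_k, \quad a_{jk} = b_{jk} + 1, \quad c_{jk} = b_{jk}. \]
Applying Corollary \ref{coefficient_sum_n=3} with $t = k$ and $r = jk$ (the underlying composition identity $a_{r+t,s} = \sum_i a_{t,i}\,a_{r,s-i+1}$ is in fact valid for all $r,t \geq 0$), I obtain
\[ b_{(j+1)k} = a_k\, b_{jk} + b_k\, a_{jk} + c_k\, c_{jk} = (b_k+1)\,b_{jk} + b_k\,(b_{jk}+1) + b_k\, b_{jk} = (3b_k + 1)\,b_{jk} + b_k. \]

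Next, after the reindexing $\ell = j - i$, the claimed formula reads
\[ S_j := \sum_{\ell=1}^{j} 3^{\ell - 1} \binom{j}{\ell} b_k^{\ell}, \]
and I would verify $S_{j+1} = (3 b_k + 1)\,S_j + b_k$ via Pascal's identity $\binom{j+1}{\ell} = \binom{j}{\ell} + \binom{j}{\ell - 1}$: the $\binom{j}{\ell}$ piece directly reproduces $S_j$, while shifting the summation index by one in the $\binom{j}{\ell - 1}$ piece contributes $b_k + 3 b_k S_j$. The base case $j = 1$ is immediate, since $S_1 = b_k$, matching $b_k$.

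I do not anticipate any serious obstacle; the computation is essentially Pascal's identity plus a bit of index shifting. The only minor technicality is that Corollary \ref{coefficient_sum_n=3} is stated under the hypothesis $r > t > 1$, and in the inductive step from $j = 1$ to $j = 2$ we apply it with $r = t = k$; this is harmless because the composition identity for $D^{r+t} = D^r \circ D^t$ holds whenever $r, t \geq 0$.
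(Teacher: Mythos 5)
Your proposal is correct and follows essentially the same route as the paper: both derive the recurrence $b_{(j+1)k}=3b_kb_{jk}+b_{jk}+b_k$ from the composition identity together with Lemma \ref{coefficients_mod_6}, and then close the induction with Pascal's identity. Your version is slightly cleaner in that it starts the induction at $j=1$ and explicitly notes that the composition identity holds beyond the stated hypothesis $r>t>1$ (a point the paper itself glosses over when it takes $r_1=r_2=k$).
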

\begin{proof}
First assume that $r_1, r_2 \equiv 0 \; \text{mod} \; 6$ and $r_1+r_2=r$. Then 
\[b_r=a_{r_1}b_{r_2}+b_{r_1}a_{r_2}+c_{r_1}c_{r_2}\]
\[=(b_{r_1}+1)b_{r_2}+b_{r_1}(b_{r_2}+1)+b_{r_1}b_{r_2}\]
\[=3b_{r_1}b_{r_2}+b_{r_1}+b_{r_2}\]
Then for our basis case $j=2$, we see it is true for 
\[b_{2k}=3b_k^2+2b_k\]
by letting $r_1=r_2=k$.\\
\textbf{Inductive Step}: Assume that 
\[b_{(j-1)k}=3^{j-2}b_k^{j-1}+3^{j-3}\binom{j-1}{j-2}b_k^{j-2}+ \cdots +(j-1)b_k\]
Then 
\[b_{jk}=3b_{(j-1)k}b_k+b_{(j-1)k}+b_k\]
\[=3(3^{j-2}b_k^{j-1}+3^{j-3}\binom{j-1}{j-2}b_k^{j-2}+ \cdots +(j-1)b_k)b_k+3^{j-2}b_k^{j-1}+3^{j-3}\binom{j-1}{j-2}b_k^{j-2}+ \cdots +(j-1)b_k+b_k\]
\[=3^{j-1}b_k^j+3^{j-2}(\binom{j-1}{j-2}+\binom{j-1}{j-1})b_k^{j-1}+3^{j-3}(\binom{j-1}{j-3}+\binom{j-1}{j-2})b_k^{j-2}+ \cdots+ jb_k\]
\[=3^{j-1}b_k^j+3^{j-2}\binom{j}{j-1}b_k^{j-1}+ \cdots+jb_k\]
 and the lemma follows.
\end{proof}

We now have the pieces we need to prove Theorem \ref{Period_for_n=3}. As mentioned in \ref{intro}, (1) and (2) were proved in Proposition 5.2 in \cite{Dular}. We provide an alternative proof of (1) because our proof provides a different perspective from \cite{Dular} of what the cycle of the basic Ducci sequence in $\mathbb{Z}_{2^l}^3$ looks like.

\begin{proof}[Proof of Theorem \ref{Period_for_n=3}]

 $\mathbf{(1)}$:  
 Let $m=2^l$ for $l \geq 2$. By Theorem \ref{6_divides_Period}, we know $6|P_{2^l}(3)$ and by Theorem \ref{HclosedTheorem2}, if $\mathbf{u} =D^k(0,0,1)$ for some $k \geq 1$ and $\mathbf{u} \in K(\mathbb{Z}_{2^l}^3)$, then $D^6(\mathbf{u})=H^3(\mathbf{u})=\mathbf{u}$ meaning that $P_{2^l}(3)|6$ and $P_{2^l}(3)=6$.
 
 $\mathbf{(2)}$: 
 We can prove $P_2(3)$ by showing there are $3$ tuples in the Ducci cycle of the basic Ducci sequence. This is how \cite{Dular} proves it, but he does not include the actual sequence in his proof. We provide the sequence here:
 \[(0,0,1)\]
 \[(0,1,1)\]
 \[(1,0,1)\]
 \[(1,1,0)\]
 \[(0,1,1)\]

 $\mathbf{(3)}$: When looking at the example of the basic Ducci sequence of $\mathbb{Z}_3^3$ in Figure 2 of Section \ref{background}, it was shown that $P_3(3)=6$.\\
 
 $\mathbf{(4)}$: Let $p>3$ be an odd prime. We first want to show that if $o(p)$ is the order of $2$ mod $p$, then $o(p)|P_p(3)$. This follows from Proposition 4.3 of \cite{Dular}, but we wish to provide an alternate proof that will also show that there is a tuple in $\mathbb{Z}_p^3$ that has period $o(p)$.
 
\indent Consider the Ducci sequence of a tuple $(x,x,x)$ for $x \in \mathbb{Z}_p$:
 \[(x,x,x)\]
 \[(2x,2x,2x)\]
 \[(4x,4x,4x)\]
 \[\vdots\]
 and we see that $D^{k_1}(x,x,x)=2^{k_1}(x,x,x)$ for $k_1\geq 0$. Therefore, the period of $(x,x,x)$ will be the smallest $k \geq 1$ such that $2^k \equiv 1 \; \text{mod} \; p$ and since $Per(x,x,x)|P_p(3)$, $k|P_p(3)$.
 
 \indent By Theorem \ref{6_divides_Period}, $6|P_p(3)$. So $\text{lcm}(6,k)|P_p(3)$.\\
 \indent Note that $\text{lcm}(6,k)=ik$ where $i=1,2,3,$ or $6$. Since $ik \equiv 0 \; \text{mod} \; 6$, we have that 
\[a_{ik}+b_{ik}+c_{ik}=2^{ik}=(2^k)^i\]
\[3b_{ik}+1 \equiv 1 \; \text{mod} \; p\]
\[3b_{ik} \equiv 0 \; \text{mod} \; p\]
\[b_{ik} \equiv 0 \; \text{mod} \; p\]
which would then give us
\[a_{ik} \equiv 1 \; \text{mod} \; p\]
\[c_{ik} \equiv 0 \; \text{mod} \; p\]
and therefore $D^{ik}(0,0,1)=(0,0,1)$ which means $P_p(3)|\text{lcm}(6,k)$ and $P_p(3)=\text{lcm}(6,k)$. \\

$\mathbf{(5)}$:  
A reminder that for non-Wieferich primes, $(5)$ follows from Theorem 5.11 in \cite{Dular}. We present a proof that work for all odd primes.

\indent Let $p$ be an odd prime. Assume that $x=P_p(3)$ and choose $N \geq 1$ to be the smallest integer such that $2^{p-1} \equiv 1 \; \text{mod} \; p^N$ and $2^{p-1} \not \equiv 1 \; \text{mod} \; p^{N+1}$. By Proposition 4.6 in \cite{Dular}, $2^{p-1} \not \equiv 1 \; \text{mod} \; p^{N+k}$ for $k \geq 1$.  

\indent We prove $(5)$ by induction with our basis case being to first prove that $P_{p^{N+1}}(3)=px$. 

By Lemma \ref{divisor_divides_period} we have $x|P_{p^{N+1}}(3)$. 

\indent Now define $o(m)$ to be the order of $2 \; \text{mod} \; m$ for $m$ odd. Note that by Proposition 4.6 in \cite{Dular}, $o(p^j)=o(p)$ for $1 \leq j \leq N$.  Since $x=lcm(6,o(p))$ by $(4)$, write $x=ko(p)$ for $k \in \{1,2,3,6\}$. By Theorem \ref{coefficients_sum_to_power_of_2},
\[a_x+b_x+c_x=2^{ko(p)}\]
\[3b_x+1=2^{ko(p)}\]
\[3b_x=2^{ko(p)}-1\]
\[3b_x=(2^{o(p)}-1)\sum_{j=0}^{k-1}2^{jo(p)}\]
The only case where we could have $(2^{o(p)}-1)|3$ is if $p=3$, but we know $3|b_x$, for that case. Therefore, we find that $(2^{o(p)}-1)|b_x$. If $N>1$, then we have $b_x \equiv 0 \; \text{mod} \; p^N$ and $P_{p^j}(3)=x$ for $1 \leq j \leq N$. We now wish to show that $b_x \not \equiv 0 \; \text{mod} \; p^{N+1}$, which we can prove by showing that $\sum_{j=0}^{k-1}2^{jo(p)} \not \equiv 0 \; \text{mod} \; p^{N+1}$. We break this up into cases based on the value of $k$. Assume $2^{o(p)} \equiv ip^N \; \text{mod} \; p^{N+1}$ for $1 \leq i <p$. Then

\textbf{Case 1} $\mathbf{k=1}$: $\sum_{j=0}^{k-1}2^{jo(p)}=1$

\textbf{Case 2} $\mathbf{k=2}$: $\sum_{j=0}^{1}2^{jo(p)}=2^{o(p)}+1 \not \equiv 0 \; \text{mod} \; p^{N+1}$

\textbf{Case 3} $\mathbf{k=3}$: 
\[\sum_{j=0}^{2}2^{jo(p)} \equiv 3ip^N+3 \; \text{mod} \; p^{N+1}\]
\[\equiv 3(ip^N+1) \; \text{mod} \; p^{N+1}\]
\[\not \equiv 0 \; \text{mod} \; p^{N+1}\]

\textbf{Case 4} $\mathbf{k=6}$: 
\[\sum_{j=0}^{5}2^{jo(p)} \equiv 15ip^N+6 \; \text{od} \; p^{N+1}\]
\[\equiv 3(5ip^N+2) \; \text{mod} \; p^{N+1}\]
\[\not \equiv 0 \; \text{mod} \; p^{N+1}\]

Therefore, $b_x \not \equiv 0 \; \text{mod} \; p^{N+1}$ and $P_{p^{N+1}}(3) \neq x$. 

\indent We now have $P_{p^j}(3)=x$ for $1 \leq j \leq N$. Then by Lemma \ref{break_up_bs}, 
\[b_{px}=3^{p-1}b_x^p+3^{p-2}\binom{p}{p-1}b_x^{p-1}+\cdots+pb_x\]
so $P_{p^{N+1}}(3)|px$. Since we know $P_{p^{N+1}}(3) \neq x$, $P_{p^{N+1}}(3)=px$. 

\textbf{Inductive:} Assume $P_{p^k}(3)=p^lP_p(3)$ where $1 \leq N \leq k-1$ and $l=k-N$. We want to show that $P_{p^{k+1}}(3)=p^{l+1}P_p(3)$. Let $x=P_p(3)$. First note 
\[b_{p^{l+1}x}=3^{p-1}b_{p^{l}x}^p+3^{p-2}\binom{p}{p-1}b_{p^{l}x}^{p-1}+\cdots+pb_{p^{l}x}\]
\[\equiv 0 \; \text{mod}\; p^{k+1}\]
because $b_{p^lx} \equiv 0 \; \text{mod} \; p^k$. Therefore, $P_{p^{k+1}}(3)|p^{l+1}x$. 

By Lemma \ref{divisor_divides_period}, $p^lx|P_{p^{k+1}}(3)$. So $P_{p^{k+1}}(3) \in \{p^lx, p^{l+1}x\}$. But 
\[b_{p^lx}=3^{p-1}b_{p^{l-1}x}^p+3^{p-2}\binom{p}{p-2}b_{p^{l-1}x}^{p-1}+ \cdots +pb_{p^{l-1}x}\]
\[\equiv p^k \; \text{mod} \; p^{k+1}\]
Because by induction, $b_{p^{l-1}x} \equiv p^{k-1} \; \text{mod} \; p^k$. Therefore, $P_{p^{k+1}}(3) = p^{l+1}x$.
 
\textbf{(6):} 
Let $m=2^lp_1^{k_1}p_2^{k_2} \cdots p_r^{k_r}$ where $p_i$ is an odd prime, $k_i \in \mathbb{Z}^+$, and $N_i$ is the smallest integer such that $2^{p_i-1} \equiv 1 \; \text{mod} \; p_i^{N_i}$ and $2^{p_i-1} \not \equiv 1 \; \text{mod} \; p_i^{N_i+1}$. Let
 $P=\text{lcm}\{x_i \; | \; x_i=P_{p_i}(3)\}\prod_{i=1}^rp_i^{\alpha_i}$ where $\alpha_i=0$ if $k_i \leq N_i$ and $\alpha_i=k_i-N_i$ if $k_i>N_i$. Note that $L_m(3)=l$ by Lemma \ref{Overall_Length}. We first set out to prove that $P_m(3)|P$. This can be done by proving $D^{l+P}(0,0,1)=D^l(0,0,1)$ or that $b_{l+P} \equiv b_l \; \text{mod} \; m$. Since $6|P$ and $p_i^{\alpha_i}x_i|P$ for $1 \leq i \leq r$, we have 
\[b_{l+P} \equiv b_l \; \text{mod} \; 2^l\]
\[b_{l+P} \equiv b_l \; \text{mod} \; p_i^{k_i}\]
which gives 
\[b_{l+P}-b_l \equiv 0 \; \text{mod} \; 2^l\]
\[b_{l+P}-b_l \equiv 0 \; \text{mod} \; p_i^{k_i}\]
and \[b_{l+P} -b_l \equiv 0 \; \text{mod} \; m\]
\[b_{l+p} \equiv b_l \; \text{mod} \; m\]
Since $l+P, l \equiv t \; \text{mod} \; 6$ for some $t, 0 \leq t \leq 6$, it will follow that $a_{l+P} \equiv a_l  \; \text{mod} \; m$ and $c_{l+P} \equiv c_l \; \text{mod} \; m$ by Lemma \ref{coefficients_mod_6}. Therefore, $D^{l+P}(0,0,1)=D^l(0,0,1)$ and $P_m(3)|P$.

\indent By Lemma \ref{divisor_divides_period}, $p_i^{\alpha_i}x_i|P_m(3)$ for every $i$. Therefore, $P|P_m(3)$ and $P_m(3)=P$ follows.
 
\end{proof}
  
\end{document}